\title{Kato's chaos of multiple mappings and its continuous self-maps}
\author{Yingcui Zhao}
\begin{document}
\newtheorem{theorem}{Theorem}[section]
\newtheorem{corollary}[theorem]{Corollary}
\newtheorem{lemma}[theorem]{Lemma}
\newtheorem{proposition}[theorem]{Proposition}
\newtheorem{problem}[theorem]{Problem}
\newtheorem{maintheorem}[theorem]{Main Theorem}
\newtheorem{definition}[theorem]{Definition}
\newtheorem{remark}[theorem]{Remark}
\newtheorem{example}[theorem]{Example}
\newtheorem{claim}{Claim}[section]
\maketitle

\begin{abstract}
In 2016, Hou and Wang\cite{HouWang2016} introduced the concept of multiple mappings based on iterated function system, which is an important branch of fractal theory. In this paper, we introduce the definitions of sensitivity, accessibility, and Kato's chaos of multiple mappings from a set-valued perspective. We show that multiple mappings and its continuous self-maps do not imply each other in terms of sensitivity and accessibility.% Kato's chaos of multiple mappings can't imply that one of its continuous self-maps is Kato chaotic. 
While a sufficient condition for multiple mappings to be sensitive, accessible and Kato's chaotic is provided, respectively.  And the sensitivity, accessibility, and Kato's chaos of multiple mappings are preserved under topological conjugation.
\end{abstract}

\section{Introduction}
Chaos is one of the most important research topics in the field of topological dynamical systems. The first rigorous mathematical definition of chaos can be traced back to 1975 by Li and Yorke\cite{ly}. Since then, scholars have embarked on vigorous research on chaos. Scholars from different fields integrated the study of chaos into their own research directions, describing the properties of chaos from different perspectives. Then, a series of different concepts of chaos have emerged, such as %sensitivity and transitivity\cite{LiLu2019},
Kato chaos\cite{Kato1996}, Devaney chaos\cite{Devaney1989}, Auslander-Yorke chaos\cite{AuslanderYorke1980}, distribution chaos\cite{SS1994}
%, shadowing properties\cite{WuZhang2019}
 and others(see \cite{Wang2022, WangHuang2007}, for example). Among the many existing concepts of chaos, Kato chaos is a very important concept.

Suppose that $X$ is a metric space with a metric $d$,  and $f$ is a continuous self-map on $X$. Let $\mathbb{N}=\{0,1,2,\cdots\}$, $\mathbb{Z^+}=\{1,2,3,\cdots\}$. We say that the dynamical system $(X,f)$ (or the map $f$) is 
\begin{enumerate}	
	\item[(1)] \emph{sensitive}, if there exists $\delta>0$ such that for any nonempty open set $U\subset X$, there exist $x,y\in U$ and $n\in \mathbb{Z^+}$ such that $d(f^n(x),f^n(y))>\delta$. 
	
	\item[(2)] \emph{accessible}, if for any $\epsilon >0$ and any nonempty open sets $U, V\subset X$ there exist $x\in U$, $y
	\in V$ and $n\in \mathbb{Z^+}$ such that $d(f^n(x),f^n(y))<\epsilon $. 
	
	\item[(3)]\emph{Kato chaotic}, if it is sensitive and accessible.
\end{enumerate}
Kato's chaos is also said to be everywhere chaotic by H. Kato in \cite{Kato1996}. H. Kato' also provided an equivalent description of Kato' chaos for continuous maps on compact metric spaces using a method similar to Li-Yorke chaos and proved that topological mixing implies Kato' chaos. In \cite{Gu2007}, Gu showed that for a continuous self-map on complete metric space with a fixed point but without an isolated point, Ruelle-Takens chaos implies Kato' chaos. But the converse isn't always true. Thus, one can see that Ruelle-Takens chaos is strictly stronger than Kato's chaos. In \cite{LiLu2020}, Li et al. studied the relations between topological weak mixing and collective accessibility, or strong accessibility, or strong Kato chaos. 

In 2016, Hou and Wang\cite{HouWang2016} defined multiple mappings derived from iterated function system. Iterated function system is an important branch of fractal theory, reflecting the essence of the world \cite{Attia2023}. They are one of the three frontiers of nonlinear science theory. Hou and Wang's focus focus was primarily on studying the Hausdorff metric entropy and Friedland entropy of multiple mappings. Additionally, they introduced the notions of Hausdorff metric Li-Yorke chaos and Hausdorff metric distributional chaos from a set-valued perspective. It is worth noting that researchers studying iterated function systems often approach the topic from a group perspective rather than a set-valued perspective. This also establishes a close connection between multiple mappings and set-valued mappings, or we can consider multiple mappings as a special case of set-valued mappings.

It is important to acknowledge the valuable role of set-valued mappings in addressing complex problems involving uncertainty, ambiguity, or multiple criteria. Set-valued mappings offer versatility and flexibility, making them highly beneficial across various fields. One prominent application of set-valued mappings is in optimization problems, where the objective is to identify the optimal set of solutions \cite{Huang2023}. For instance, in multi-objective optimization, a set-valued mapping can represent the Pareto front, encompassing all non-dominated solutions.

Set-valued mappings also prove useful in decision-making processes that require considering multiple criteria or preferences \cite{Zor2021}. By representing feasible solutions as sets, decision-makers can thoroughly analyze and compare different options, enabling them to make well-informed decisions. Additionally, set-valued mappings find applications in data analysis tasks such as clustering and classification. Unlike assigning each data point to a single category, set-valued mappings can represent uncertainty or ambiguity by assigning data points to multiple categories. In fact, the applications of set-valued mappings are vast and diverse, encompassing numerous fields beyond those mentioned here.

In \cite{ziji}, we studied that if multiple mappings $F$ has a disdributionally chaotic
pair, especially $F$ is distributionally chaotic, $S\Omega(F)$
contains at least two points and gives a sufficient condition for
$F$ to be distributionally chaotic in a sequence and chaotic in the
strong sense of Li-Yorke. Zeng et al. \cite{zeng2020} proved two topological conjugacy dynamical systems to multiple mappings have simultaneously Hausdorff metric Li-Yorke chaos or Hausdorff distributional chaos and the multiple mappings $F$ and its $2$-tuple of continuous self-maps $f_1,f_2$ are not mutually implied in terms of Hausdorff metric Li-Yorke chaos.

The current paper aims to consider the image of one point under multiple mappings as a set (a compact set). We primarily consider the relationship between multiple mappings $F$ and its $2$-tuple of continuous self-maps $f_1,f_2$ in terms of sensitivity, accessibility and Kato's chaos. 

The specific layout of the present paper is
as follows. Some preliminaries and definitions are introduced in Section \ref{sec2}.
Then we study the relation between multiple mappings and its continuous self-maps in terms of sensitivity, accessibility, and Kato's chaos in Section \ref{sec3}, and show that the sensitivity, accessibility, and Kato's chaos of multiple mappings are preserved under topological conjugation in Section \ref{sec4}. 

\section{Preliminaries}\label{sec2}
Let $F=\{f_1,f_2,\cdots,f_n\}$ be a \emph{multiple mappings} with $n$-tuple of continuous self-maps on $X$. Note that for any $x\in X$, $F(x)=\{f_1(x),f_2(x),\cdots,f_n(x)\}\subset X$ is compact. Let $$\mathbb{K}(X)=\{K\subset X\mid K~\text{is nonempty and compact}\}.$$
Then $F$ is from $X$ to $\mathbb{K}(X)$. The Hausdorff metric $d_H$ on $\mathbb{K}(X)$ is defined by $$d_H(A,B)=\max\{\sup_{a\in A}\inf_{b\in B}d(a,b),\sup_{b\in B}\inf_{a\in A}d(a,b)\},\forall A,B\subset X.$$ It is clear that $\mathbb{K}(X)$ is a compact metric space with the Hausdorff metric $d_H$. 

For convenience, let us study multiple mappings on compact metric spaces from a set-valued view, using the example of examining two continuous self-maps. The definitions and conclusions presented in this paper can be easily extended to the case of a multiple mappings formed by any finite number of continuous self-maps. Consider the multiple mappings $F=\{f_1,f_2\}$. For any $n>0$, $F^n:X\rightarrow\mathbb{K}(X)$ is defined by for  any $x\in X$,$$F^n(x)=\{f_{i_1}f_{i_2}\cdots f_{i_n}(x)\mid i_1,i_2,\cdots,i_n=1\text{ or }2\}.$$
It is obvious that $F^n(x)\in \mathbb{K}(X)$. For any $A\subset X$, let $$F^n(A)=\{f_{i_1}f_{i_2}\cdots f_{i_n}(a)\mid a\in A, i_1,i_2,\cdots,i_n=1\text{ or }2\}=\bigcup_{a\in A}F^n(a).$$
Particularly, if $A\in\mathbb{K}(X)$, $F^n(A)\in\mathbb{K}(X)$. Then, $F$ naturally induces a continuous self-map on $\mathbb{K}(X)$, denoted by $\widetilde{F}:\mathbb{K}(X)\rightarrow\mathbb{K}(X)$.

Now, we define the concept of sensitivity, accessibility and Kato's chaos for multiple mappings from a set-valued view. 
\begin{definition}\label{newde}
	Suppose that $F=\{f_1,f_2\}$ be a multiple mappings with $2$-tuple of continuous self-maps on $X$. We say that $F$ is
	\begin{enumerate}	
		\item[(1)] \emph{(Hausdorff metric) sensitive}, if there exists $\delta>0$ such that for any nonempty open set $U\subset X$, there exist $x,y\in U$ and $n\in \mathbb{Z^+}$ such that $d_H(F^n(x),F^n(y))>\delta$. 
		
		\item[(2)] \emph{(Hausdorff metric) accessible}, if for any $\epsilon >0$ and any nonempty open sets $U, V\subset X$ there exist $x\in U$, $y
		\in V$ and $n\in \mathbb{Z^+}$ such that $d_H(F^n(x),F^n(y))<\epsilon $. 
		
		\item[(3)]\emph{(Hausdorff metric) Kato chaotic}, if it is sensitive and accessible.
	\end{enumerate}
\end{definition}
It is easy to see that the Hausdorff metric sensitivity, accessibility and Kato's chaos of multiple mappings, in the case of degradation (where the multiple mappings consists of only one continuous self-map), is the same as the sensitivity of a classical single continuous self-map. Next we provide an example to illustrate the existence of the newly defined concept Definition \ref{newde}. 
\begin{example}
	Consider the multiple map defined on $[0,1]$ as $F=\{f_1,f_2\}$, in which 
	\begin{equation*}
		\begin{aligned}
			f_1(x)=\begin{cases}
				2x, & 0\leq x\leq\frac{1}{2}, \\
				2-2x, & \frac{1}{2}<x\leq 1,
			\end{cases}
		\end{aligned}
		\begin{aligned}
			f_2(x)=\begin{cases}
				1-2x, & 0\leq x\leq\frac{1}{2}, \\
				2x-1, & \frac{1}{2}<x\leq 1.
			\end{cases}
		\end{aligned}
	\end{equation*}
	%	\begin{equation*}
		%		f_1(x)=\begin{cases}
			%			2x, & 0\leq x\leq\frac{1}{2}, \\
			%			2-2x, & \frac{1}{2}<x\leq 1,
			%		\end{cases}
		%	\end{equation*}
	%	\begin{equation*}
		%		f_2(x)=\begin{cases}
			%			1-2x, & 0\leq x\leq\frac{1}{2}, \\
			%			2x-1, & \frac{1}{2}<x\leq 1.
			%		\end{cases}
		%	\end{equation*}
	\begin{itemize}
		%		\item [(1)] As we all know, $f_1$ is the tent map on $[0,1]$. Then $f_1$ is sensitive and accessible. Also, $f_1$ is Kato chaotic.
		%		\item[(2)]Now we show $f_2$ is exact. Let $U$ be a nonempty open set of $[0,1]$. 
		%		\begin{itemize}
			%			\item [(i)] If $1\in U$, then there exist $x\in U$ and $m>0$ such that $f_2^m(x)=\frac{1}{2}$. Thus $f_2^{m+1}(U)=[0,1]$. Then there exists $n$ such that $f_2^{k}(U)=[0,1],\forall k\geq n$.  
			%			\item[(ii)] If $1\notin U$, Let $J\subset U$ be open. If for any $m\geq 0$, $\frac{1}{2}\notin f_2^m(J)$, then by the length of $f_2^m(J)$ is twice as much as the length of $J$, the length of $f_2^m(J)$ is greater than 1 for some $m$. This is in contradiction. Thus, there exists $m>0$ such that $\frac{1}{2}\in f_2^m(J)$. Then $f_2^{m+2}(U)$ contains the open neighborhood of $1$. As mentioned above, then there exists $n$ such that $f_2^{k}(U)=[0,1],\forall k\geq n$.  
			%		\end{itemize}
		%		By (i) and (ii), $f_2$ is exact. So, $f_2$ is sensitive and accessible. Also, $f_2$ is Kato chaotic.
		\item[(1)]It is can be verified that for any $x\in [0,1]$ and any $n>0$, $F^n(x)=\{f_1^n(x),f_2^n(x)\}$ and $f_1^n(x)+f_2^n(x)=1$.  Let $U$ be nonempty open set of $[0,1]$. Then there exist $x,y\in U$ and $n\neq m$ such that $f_1^n(x)=f_2^n(x)=\frac{1}{2}$ and $f_1^m(y)=f_2^m(y)=\frac{1}{2}$. Without loss of generality, let $n>m$. Then, $F^n(x)=\{\frac{1}{2}\}$ and $F^n(y)=\{0,1\}$. Thus, $d_H(F^n(x),f^n(y))>\frac{1}{2}$. So, $F$ is sensitive.
		
		\item[(2)]Let $\epsilon>0$ and $U,V$ be nonempty open sets of $[0,1]$. Then there exist $x\in U,y\in V$ and $n,m>0$ such that $f_1^n(x)=f_2^n(x)=\frac{1}{2}$ and $f_2^m(y)=f_1^m(y)=\frac{1}{2}$. Thus there exists $k=\max\{n,m\}+2$ such that $F^k(x)=F^k(y)=\{0,1\}$. That is $d_H(F^n(x),f^n(y))=0<\epsilon$. So, $F$ is accessible.
		\item[(3)] By (1) and (2), $F$ is Kato chaotic.
	\end{itemize}
\end{example}
\section{Relation Between $F$ and $f_1,f_2$}\label{sec3}
A natural question is what is the implication between the sensitivity / accessibility / Kato's chaos of multiple mappings $F=\{f_1,f_2\}$ and the sensitivity / accessibility / Kato's chaos of its $2$-tuple of continuous self-maps $f_1,f_2$?

\subsection{Sensitivity}
Firstly, we show the sensitivity of $F$ can't imply that $f_1$ or $f_2$ is sensitive.
\begin{example}\label{yyzeng}
	Consider the multiple mappings defined on $[0,1]$ as $F=\{f_1,f_2\}$, in which
	
	\begin{equation*}
		\begin{aligned}
			f_1(x)=\begin{cases}
				2x, & 0\leq x\leq\frac{1}{2}, \\
				1, & \frac{1}{2}<x\leq 1,
			\end{cases}
		\end{aligned}
		\begin{aligned}
			f_2(x)=\begin{cases}
				1, & 0\leq x\leq\frac{1}{2}, \\
				2-2x, & \frac{1}{2}<x\leq 1.
			\end{cases}
		\end{aligned}
	\end{equation*}
	Let
	\begin{equation*}
		f(x)=\begin{cases}
			2x, & 0\leq x\leq\frac{1}{2}, \\
			2-2x, & \frac{1}{2}<x\leq 1.
		\end{cases}
	\end{equation*}
	Then for any $x\in [0,1]$ and any $n\geq 2$, $F^n(x)=\{0,1,f^n(x)\}$. 
	\begin{itemize}
		\item[(1)]As we all know, $f$ is sensitive. Let $\delta>0$ be the sensitive constant for $f$. Then for any nonempty open set $U\subset [0,1]$, there exist $x,y\in U$ and $n>0$ such that $d(f^n(x),f^n(y))>\delta$. Therefore, there exist $x,y\in U$ and $n>0$ such that $d_H(F^n(x),F^n(y))>\delta$. So, $F$ is sensitive.
		\item[(2)]Let $U=(\frac{1}{2},1)$. Then for any $x,y\in U$ and any $n>0$, $f_1^n(x)=f_1^n(y)=1$. So, $f_1$ is not sensitive.
		\item[(3)]Let $U=(0,\frac{1}{2})$. Then for any $x,y\in U$ and any $n>0$, $f_2^n(x)=f_2^n(y)=1$ or $0$. So, $f_2$ is not sensitive.
	\end{itemize}
\end{example}
Although $F$ being sensitive does not imply $f_1$ or $f_2$ being sensitive, the following theorem demonstrates that this implication holds under additional conditions.
\begin{theorem}
	If $F$ is sensitive and $f_1(x)=c$ ($\forall x\in X$, $c$ is a constant) , then $f_2$ is sensitive.
\end{theorem}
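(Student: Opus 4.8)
The plan is to exploit the constancy of $f_1$ to get an explicit description of $F^n(x)$: namely, $F^n(x)$ consists of a fixed finite set (independent of $x$) together with the single ``moving'' point $f_2^n(x)$. Once this is established, the Hausdorff distance between $F^n(x)$ and $F^n(y)$ is controlled \emph{from above} by $d(f_2^n(x),f_2^n(y))$, and sensitivity of $F$ transfers directly to sensitivity of $f_2$ with the same constant.

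First I would compute $F^n(x)$. For a word $i_1 i_2 \cdots i_n \in \{1,2\}^n$, the composition $f_{i_1}f_{i_2}\cdots f_{i_n}(x)$ applies $f_{i_n}$ first and $f_{i_1}$ last. If $i_1=\cdots=i_k=2$ and $i_{k+1}=1$ for some $0\le k<n$, then after the (innermost occurrence of) $f_1$ produces $c$, the remaining $k$ maps are all $f_2$, so the value is $f_2^{\,k}(c)$; and if $i_1=\cdots=i_n=2$ the value is $f_2^{\,n}(x)$. Hence
\[
F^n(x) = \{c, f_2(c), \dots, f_2^{\,n-1}(c)\} \cup \{f_2^{\,n}(x)\} =: A_n \cup \{f_2^{\,n}(x)\},
\]
where the finite set $A_n$ does not depend on $x$. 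Next I would record the elementary inequality: for any nonempty compact $A\subseteq X$ and any $a,b\in X$, $d_H(A\cup\{a\},\,A\cup\{b\})\le d(a,b)$. This holds because every point of $A$ lies in both $A\cup\{a\}$ and $A\cup\{b\}$, so in the definition of $d_H$ the only nonzero contributions to the two one-sided suprema come from $a$ and $b$, and $\inf_{q\in A\cup\{b\}}d(a,q)\le d(a,b)$, and symmetrically with $a,b$ swapped.

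Finally, let $\delta>0$ be a sensitivity constant for $F$ and let $U\subseteq X$ be nonempty and open. Choose $x,y\in U$ and $n\in\mathbb{Z^+}$ with $d_H(F^n(x),F^n(y))>\delta$. Then
\[
\delta < d_H(F^n(x),F^n(y)) = d_H\bigl(A_n\cup\{f_2^{\,n}(x)\},\,A_n\cup\{f_2^{\,n}(y)\}\bigr) \le d\bigl(f_2^{\,n}(x),f_2^{\,n}(y)\bigr),
\]
so $f_2$ is sensitive with the same constant $\delta$. The argument is short; the only steps requiring care are the bookkeeping in identifying $F^n(x)$ (getting the composition order right, and checking that the $x$-independent part is exactly $A_n$) and the verification of the Hausdorff-distance inequality. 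I do not anticipate a genuine obstacle.
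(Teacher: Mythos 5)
Your proposal is correct and follows essentially the same route as the paper: both identify $F^n(x)=\{c,f_2(c),\dots,f_2^{n-1}(c)\}\cup\{f_2^n(x)\}$ with the $x$-independent part fixed, and then transfer the sensitivity constant of $F$ directly to $f_2$. The only difference is that you explicitly verify the inequality $d_H(A\cup\{a\},A\cup\{b\})\le d(a,b)$, which the paper uses implicitly.
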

\begin{proof}
	Let $\delta>0$ be a sensitive constant of $F$, $x\in X$ and $\epsilon>0$. Then there exist $y\in B_d(x,\epsilon)$ and $n>0$ such that $d_H(F^n(x),F^n(y))>\delta$. That is, $$d_H(\{c,f_2(c),f_2^2(c),\cdots,f_2^{n-1}(c),f_2^n(x)\},\{c,f_2(c),f_2^2(c),\cdots,f_2^{n-1}(c),f_2^n(y)\})>\delta.$$ Then $d(f_2^n(x),f_2^n(y))>\delta$. So, $f_2$ is sensitive.
\end{proof}

Considering this implication in reverse, we give a sufficient condition for $F$ to be sensitive as Theorem \ref{thFsen}.

\begin{theorem}\label{thFsen}
	If $f_1(x)=c$ ($\forall x\in X$, $c$ is a constant) , $f_2(c)=c$, and $f_2$ is sensitive, then $F$ is sensitive.
\end{theorem}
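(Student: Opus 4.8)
The plan is to first determine the shape of $F^{n}(x)$ under the two structural hypotheses and then push the sensitivity of $f_{2}$ through the Hausdorff metric. Since $f_{1}\equiv c$, as soon as a composition $f_{i_{1}}f_{i_{2}}\cdots f_{i_{n}}$ uses the index $1$ its running value becomes $c$, after which every remaining map --- whether $f_{1}$ or $f_{2}$ (here one uses $f_{2}(c)=c$) --- keeps it equal to $c$; the only composition that never uses the index $1$ is $f_{2}^{n}$. Hence, for every $x\in X$ and every $n\in\mathbb{Z^+}$,
\[
F^{n}(x)=\{\,c,\ f_{2}^{n}(x)\,\}.
\]

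Next I would fix a sensitivity constant $\delta>0$ for $f_{2}$ and claim that $\delta/2$ is a sensitivity constant for $F$. Let $U\subset X$ be nonempty open. By sensitivity of $f_{2}$ there are $x,y\in U$ and $n\in\mathbb{Z^+}$ with $d(f_{2}^{n}(x),f_{2}^{n}(y))>\delta$. By the triangle inequality $\max\{d(f_{2}^{n}(x),c),d(f_{2}^{n}(y),c)\}>\delta/2$, so relabelling if necessary we may assume $d(f_{2}^{n}(x),c)>\delta/2$. Using the formula above together with the elementary identity
\[
d_{H}(\{c,a\},\{c,b\})=\max\bigl\{\min\{d(a,c),d(a,b)\},\ \min\{d(b,c),d(a,b)\}\bigr\},
\]
applied with $a=f_{2}^{n}(x)$ and $b=f_{2}^{n}(y)$, we get
\[
d_{H}(F^{n}(x),F^{n}(y))\ \ge\ \min\{d(f_{2}^{n}(x),c),\,d(f_{2}^{n}(x),f_{2}^{n}(y))\}\ >\ \delta/2,
\]
since both quantities inside the minimum exceed $\delta/2$. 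As $x,y\in U$ and $n\in\mathbb{Z^+}$, this proves $F$ is sensitive with constant $\delta/2$.

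The only genuine obstacle is that $d_{H}(\{c,a\},\{c,b\})$ can be far smaller than $d(a,b)$: by the displayed identity, a large separation of $a$ and $b$ is worthless unless at least one of them is also bounded away from the shared value $c$. The triangle-inequality step is exactly what supplies this, and it is the reason the hypothesis $f_{2}(c)=c$ --- which forces $c$ to persist in every $F^{n}(x)$ --- does not destroy the argument. Everything else (the description of $F^{n}(x)$ and the two-point Hausdorff-distance identity) is routine verification and I would not belabour it.
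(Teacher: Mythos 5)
Your proof is correct and takes essentially the same route as the paper's: both rest on the observation that $F^{n}(x)=\{c,f_{2}^{n}(x)\}$ and on showing that $\delta/2$ (for $\delta$ a sensitivity constant of $f_{2}$) serves as a sensitivity constant for $F$, with the key point in each case being that a small Hausdorff distance between $\{c,f_{2}^{n}(x)\}$ and $\{c,f_{2}^{n}(y)\}$ forces $d(f_{2}^{n}(x),f_{2}^{n}(y))\leq\delta$ via the triangle inequality through $c$. The only difference is organizational: you argue directly from the explicit two-point Hausdorff-distance formula, whereas the paper states the contrapositive as a proof by contradiction.
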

\begin{proof}
	For any $x\in X$ and any $n>0$, $F^n(x)=\{c,f_2^n(x)\}$. Let $\delta>0$ be a sensitive constant of $f_2$. Then for any $x\in X$ and any $\epsilon>0$, there exist $y_0\in B_d(x,\epsilon)$ and $n_0>0$ such that $d(f_2^{n_0}(x),f_2^{n_0}(y_0))>\delta$. 
	
	Suppose that $\frac{\delta}{2}$ is not a sensitive constant of $F$. Then there exist $x_0\in X$ and $\epsilon_0>0$ such that for any $y\in B_d(x_0,\epsilon_0)$ and any $n>0$, we have $$d_H(F^n(x_0),F^n(y))=d_H(\{c,f_2^n(x_0)\},\{c,f_2^n(y)\})\leq\frac{\delta}{2}.$$ Thus at least one of the following two statements holds true.
	
	\begin{itemize}
		\item [(1)]$d(f_2^n(x_0),f_2^n(y))\leq\frac{\delta}{2}$.
		\item[(2)]$d(f_2^n(x_0),f_2^n(y))>\frac{\delta}{2}$, $d(f_2^n(x_0),c)\leq\frac{\delta}{2}$ and $d(c,f_2^n(y))\leq\frac{\delta}{2}$. 
	\end{itemize}
	Then for any $y\in B_d(x_0,\epsilon_0)$ and any $n>0$, $d(f_2^n(x_0),f_2^n(y))\leq\delta$. This is a  contradiction. So, $F$ is sensitive.
\end{proof}

\subsection{Accessibility}
Firstly, we show that there exists a multiple mappings $F=\{f_1,f_2\}$ that is accessible, but neither $f_1$ nor $f_2$ is accessible.
\begin{example} Consider the multiple mappings defined on $X=\{0,1,2\}$ as $F=\{f_1,f_2\}$, in which $f_1:0\rightarrow 1\rightarrow 2\rightarrow 0$ and $f_2:0\rightarrow2\rightarrow1\rightarrow0$. 
	
	\begin{itemize}
		\item[(1)]For any $x\in X$ and any $n\geq2$, $F^n(x)=X$. So, $F$ is accessible.
		\item[(2)]For any $n>0$, $f_1^n(0)\neq f_1^n(1)$ and $f_2^n(0)\neq f_2^n(1)$. So, neither $f_1$ nor $f_2$ is accessible.
	\end{itemize}
\end{example}
Next we show there exists an example in which both $f_1$ and $f_2$ are accessible but $F=\{f_1,f_2\}$ isn't accessible. While before that, let us introduce two necessary lemmas.

\begin{lemma}\label{sas2a}
	Let $X$ be a nonempty invariable set of $\Sigma^2$. If $\sigma$ is accessible, then $\sigma^2$ is accessible.
\end{lemma}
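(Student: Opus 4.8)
The statement concerns the shift map $\sigma$ on a subshift $X \subseteq \Sigma^2$ (the full two-symbol shift space), where $X$ is $\sigma$-invariant. We want to deduce accessibility of $\sigma^2$ (acting on $X$, presumably with $X$ also $\sigma^2$-invariant since it is $\sigma$-invariant) from accessibility of $\sigma$. Recall accessibility: for every $\epsilon > 0$ and every pair of nonempty open sets $U, V$, there exist $x \in U$, $y \in V$, and $n \in \mathbb{Z}^+$ with $d(\sigma^n x, \sigma^n y) < \epsilon$. The obstruction to transferring this from $\sigma$ to $\sigma^2$ is that the witnessing time $n$ for $\sigma$ might be odd, and $\sigma^n = (\sigma^2)^{n/2}$ only makes sense directly when $n$ is even. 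So the plan is to engineer that $n$ can be taken even, at the cost of shrinking the open sets by one cylinder coordinate.

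**The key steps.**

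First, fix $\epsilon > 0$ and nonempty open sets $U, V \subseteq X$. Choose a small cylinder set (basic open set determined by finitely many coordinates) $U' \subseteq U$; write $U'$ as the set of points in $X$ beginning with a fixed finite word $w$. Now apply accessibility of $\sigma$ to the pair $U'$ and $V$: we obtain $x \in U'$, $y \in V$, and some $n \in \mathbb{Z}^+$ with $d(\sigma^n x, \sigma^n y) < \epsilon$. If $n$ is even we are already done, taking $m = n/2$. If $n$ is odd, the idea is to "waste one step'' inside $U'$: since $x$ begins with the word $w$, consider instead the point $x'$ obtained by prepending one extra symbol to the tail of $x$ so that $\sigma x' = x$ — more precisely, note $x'$ can be chosen in $X$ beginning with $w$ truncated appropriately, or one passes instead to $\sigma x$ and $\sigma y$, replacing the target sets. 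The cleanest route: shrink $U$ to a cylinder $U'$ on coordinates $0,\dots,k$ and observe that $\sigma U'$ is still open, and similarly for $V$; apply accessibility of $\sigma$ to $\sigma U'$ and $\sigma V$ to get a witness time $n$, then the points in $U', V$ mapping into them give witness time $n+1$ for the original sets — flipping parity. Combining the two cases (the raw witness $n$ if even, or the flipped witness $n+1$ if $n$ was odd) we always secure an even witness time $2m$, hence $d((\sigma^2)^m x, (\sigma^2)^m y) < \epsilon$ for suitable $x \in U$, $y \in V$.

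**The main obstacle.**

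The delicate point is the parity-flipping step: one must make sure that after decreasing (or increasing) the witness time by one, the resulting points genuinely lie in the \emph{original} open sets $U$ and $V$, not merely in shifted copies, and that this can be done \emph{within the subshift} $X$ rather than the full shift — this is where $\sigma$-invariance of $X$ is used, to guarantee that the preimage/image manipulations stay inside $X$. A careful formulation is to pass to the preimage: given open $U$, for any point $z \in X$ there exists (by invariance, since $\sigma(X) \subseteq X$ and typically $\sigma$ is surjective on a subshift, or one argues with $\sigma^{-1}$) a point $z' \in X$ with $\sigma(z')$ close to $z$; combined with continuity of $\sigma$, a point near $z$ pulls back to a point whose image is near $z$. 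I expect the bookkeeping to hinge on choosing the cylinder neighborhoods small enough (depth $\geq 1$) that a one-coordinate shift is absorbed, and on invoking uniform continuity of $\sigma$ to control how $\epsilon$ must be adjusted. Modulo that care, the argument is a routine parity adjustment.
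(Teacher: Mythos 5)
There is a genuine gap in your parity-flipping step. Accessibility only gives you the existence of \emph{some} witness time for each pair of open sets; it does not let you prescribe the parity of that time. When you apply accessibility to the pair $(\sigma U',\sigma V)$ you obtain a \emph{fresh} witness time, say $n_2$, which has nothing to do with the time $n_1$ you obtained for $(U',V)$, and the pulled-back time $n_2+1$ for $(U',V)$ is again of uncontrolled parity: nothing prevents $n_1$ from being odd and $n_2$ from being even, in which case both branches of your case analysis deliver odd witness times and no even time is secured. (Your write-up treats the $n$ from the second application as if it were the same $n$ as in the first, which is where the argument breaks.) A secondary problem is the claim that $\sigma U'$ is open: on a proper invariant subset of $\Sigma^2$ the shift need not be an open map --- for instance in $X=\{0^\infty\}\cup\{0^k10^\infty\mid k\ge 0\}$ the open set $\{10^\infty\}$ has image $\{0^\infty\}$, which is not open --- so even the set-up of the ``cleanest route'' is not available in general.

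The fix, and the route the paper takes, is to adjust the \emph{tolerance} rather than trying to adjust the \emph{time}: given $\epsilon>\frac{1}{K}>0$, apply accessibility of $\sigma$ to $U,V$ with the smaller tolerance $\frac{1}{K+4}$. In the shift metric, $d(\sigma^n(x),\sigma^n(y))<\frac{1}{K+4}$ means the two tails agree on their first $K+4$ or so coordinates, so applying $\sigma$ one or two further steps still leaves agreement on at least $K+2$ coordinates; hence with the \emph{same} points $x\in U$, $y\in V$ one gets $d\bigl((\sigma^2)^{[\frac{n}{2}]+1}(x),(\sigma^2)^{[\frac{n}{2}]+1}(y)\bigr)<\frac{1}{K}<\epsilon$, and $2([\frac{n}{2}]+1)$ is even by construction. (Equivalently, one may invoke uniform continuity of $\sigma$ and $\sigma^2$ to choose $\delta$ with $d<\delta\Rightarrow d(\sigma\cdot,\sigma\cdot)<\epsilon$, and apply accessibility at tolerance $\min(\delta,\epsilon)$.) Your closing sentence about uniform continuity gestures toward exactly this, but the argument you actually lay out does not use it, and as written it does not close the parity gap.
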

\begin{proof}
	Necessity$\Rightarrow$: Let $\epsilon>\frac{1}{K}>0$, $U$ and $V$ be nonempty open sets of $X$. By $\sigma$ is accessible, there exist $x\in U$, $y\in V$ and $n>0$ such that $d(\sigma^n(x),\sigma^n(y))<\frac{1}{K+4}$. Then $d((\sigma^2)^{[\frac{n}{2}]+1}(x),(\sigma^2)^{[\frac{n}{2}]+1}(y))<\frac{1}{K}<\epsilon$. So, $\sigma^2$ is accessible.
	
	Sufficiency is easy.
\end{proof}
\begin{lemma}\label{dab}
	Let $A,B\subset \mathbb{N}$. If $d(A)=d(B)=1$, then $d(A\cap B)=1$.
\end{lemma}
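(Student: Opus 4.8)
The claim is the standard fact that upper/lower density 1 (here the notation $d(A)=1$ must mean the lower density $\underline{d}(A)=\liminf_{n\to\infty}\frac{|A\cap[0,n)|}{n}=1$, equivalently $\lim_{n\to\infty}\frac{|A\cap[0,n)|}{n}=1$) is closed under finite intersection. The plan is to argue directly with counting functions. Write $A^c=\mathbb{N}\setminus A$ and $B^c=\mathbb{N}\setminus B$. From $d(A)=1$ we get $\overline{d}(A^c)=\limsup_{n}\frac{|A^c\cap[0,n)|}{n}=0$, and similarly $\overline{d}(B^c)=0$. Since $(A\cap B)^c=A^c\cup B^c$, for every $n$ we have the subadditivity estimate
\[
\frac{|(A\cap B)^c\cap[0,n)|}{n}\le \frac{|A^c\cap[0,n)|}{n}+\frac{|B^c\cap[0,n)|}{n}.
\]
Taking $\limsup$ as $n\to\infty$ and using that $\limsup$ is subadditive gives $\overline{d}((A\cap B)^c)\le \overline{d}(A^c)+\overline{d}(B^c)=0$, hence $\overline{d}((A\cap B)^c)=0$, which is exactly $d(A\cap B)=1$.

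If instead $d(\cdot)$ is intended as the upper density, the same inequality still applies but one must be slightly more careful: from $\overline{d}(A)=\overline{d}(B)=1$ one does not in general get $\overline{d}(A\cap B)=1$, so in that reading the lemma would be false (take $A$ the evens, $B$ the odds). Consequently the only sensible interpretation is that $d$ denotes a density for which $d(A)=1$ forces the complement to be a density-zero set, and then the displayed subadditivity of the complement's counting function is the whole argument. I would state this reading explicitly at the start of the proof.

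The argument is essentially a one-liner, so there is no real obstacle; the only thing to be careful about is bookkeeping with $\limsup$ versus $\liminf$ and making sure the density convention is pinned down so that "density $1$" genuinely means "complement has density $0$." Once that is fixed, finite subadditivity of the counting function of a union, together with subadditivity of $\limsup$, closes the proof immediately, and an obvious induction extends it from two sets to any finite family.
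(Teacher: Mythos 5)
Your proof is correct and is essentially the same argument as the paper's: the paper bounds $|(A\cap B)\cap\{0,\dots,N-1\}|$ from below by $|A\cap\{0,\dots,N-1\}|-|(\mathbb{N}\setminus B)\cap\{0,\dots,N-1\}|$ and lets $N\to\infty$, which is just the complemented form of your subadditivity estimate $|(A\cap B)^{c}\cap[0,N)|\le|A^{c}\cap[0,N)|+|B^{c}\cap[0,N)|$. Your reading of $d$ as the natural (lower) density is the intended one, so the interpretive caveat you raise is resolved in the way you expect.
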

\begin{proof}
	Put $a_{2n}=\frac{1}{2n}\mid A\bigcap\{0, 1, \cdots, 2n-1\}\mid $, $a_{2n+1}=\frac{1}{2n+1}\mid A\bigcap\{0, 1, \cdots, 2n\}\mid $, $b_{2n}=\frac{1}{2n}\mid B\bigcap\{0, 1, \cdots, 2n-1\}\mid $, $b_{2n+1}=\frac{1}{2n+1}\mid B\bigcap\{0, 1, \cdots, 2n\}\mid $, $c_{2n}=\frac{1}{2n}\mid (\mathbf{N}-B)\bigcap\{0, 1, \cdots, 2n-1\}\mid $, $c_{2n+1}=\frac{1}{2n+1}\mid (\mathbf{N}-B)\bigcap\{0, 1, \cdots, 2n\}\mid $. Then,  $\lim_{n\rightarrow\infty}a_{2n}=\lim_{n\rightarrow\infty}a_{2n+1}=\lim_{n\rightarrow\infty}b_{2n}=\lim_{n\rightarrow\infty}b_{2n+1}=1,$ and $\lim_{n\rightarrow\infty}c_{2n}=\lim_{n\rightarrow\infty}c_{2n+1}=0.$ Thus, $\lim_{n\rightarrow\infty}\frac{1}{2n}\mid (A\bigcap B)\bigcap\{0, 1, \cdots, 2n-1\}\mid \geq\lim_{n\rightarrow\infty}(a_{2n}-c_{2n})=1,$ and $\lim_{n\rightarrow\infty}\frac{1}{2n+1}\mid (A\bigcap B)\bigcap\{0, 1, \cdots, 2n\}\mid \geq\lim_{n\rightarrow\infty}(a_{2n+1}-c_{2n+1})=1$.
	So, $d(A\bigcap B)=1$.
\end{proof}
\begin{example}
	Let $\theta=0\cdots0\cdots$, $$\omega=\overbrace{11}^{2^1}\overbrace{0}^{1}\overbrace{1\cdots1}^{2^2}\overbrace{00}^{2}\overbrace{1\cdots1}^{2^3}\overbrace{000}^{3}\cdots\cdots\overbrace{1\cdots1}^{2^n}\overbrace{00}^{n}\cdots\cdots,$$ and $$X=\{\theta\}\cup\{\omega,\sigma(\omega),\sigma^2(\omega),\cdots \cdots\}=\{\theta\}\cup orb(\omega,\sigma).$$
	Consider the multiple mappings defined on $X$ as $F=\{\sigma,\sigma^2\}$, in which $\sigma$ is a shift mapping $\sigma(x)=x_1x_2\cdots,\forall x=x_0x_1x_2\cdots\in X$. And the metric on $X$ is defined by for any $x=x_0x_1\cdots,y=y_0y_1\cdots\in X$,
	\begin{equation*}
		d(x,y)=\begin{cases}
			0, & x=y, \\
			\frac{1}{k}, & x\neq y, 
		\end{cases}
	\end{equation*}
	in which $k=\min\{n\geq 0\mid x_n\neq y_n\}+1.$ 
	\begin{itemize}
		\item [(1)]Let $\epsilon>0$. Then there exists $K\in\mathbb{N}$ such that $\epsilon>\frac{1}{K}>0$. Let $x=x_0x_1x_2\cdots,y=y_0y_1y_2\cdots\in X$.
		\begin{itemize}
			\item [(i)] $x=\theta$ and $y\in orb(\omega,\sigma)$. By the structure of $\omega$, there exists $n>0$ such that $d(\sigma^n(\theta),\sigma^n(y))<\frac{1}{K}<\epsilon$.
			\item[(ii)]$x,y\in orb(\omega,\sigma)$. Let $A=\{i\in\mathbb{N}\mid x_i=1\}$ and $B=\{i\in\mathbb{N}\mid y_i=1\}$. Then $d(A)=d(B)=1$. By Lemma \ref{dab}, $d(A\cap B)=1$. Thus, there exists $n>0$ such that $d(\sigma^n(\theta),\sigma^n(y))<\frac{1}{K}<\epsilon$.
		\end{itemize}
		By (i) and (ii), $\sigma$ is accessible.
		\item[(2)] By (1) and Lemma \ref{sas2a}, $\sigma^2$ is accessible.
		\item[(3)] Firstly, we show $\theta$ is a isolated point of $X$ using proof by contradiction.
		
		Suppose that there exists $\{n_i\}$ such that $\lim_{i\rightarrow\infty}\sigma^{n_i}(\omega)=\theta$. Then for any $\epsilon>0$, there exists $N$, for any $n_i>N$, $d(\sigma^{n_i}(\omega),\theta)<\epsilon$. By the structure of $\omega$, for any $N$, there exists $m>N$ such that $d(\sigma^{m}(\omega),\theta)>\frac{1}{2}$. This is a contradiction. So, $\theta$ is a isolated point of $X$.
		
		Secondly, we show $F$ is not accessible. Let $U=\{\theta\}$ and $V=\{x_0x_1x_2\cdots\in X\mid x_0x_1\cdots x_{19}=11011110011111111000\}=\{\omega\}$, then both $U$ and $V$ are nonempty open sets of $X$. For any $x\in U$ and any $y\in V$, if there exists $n>0$ such that $d_H(F^n(x),F^n(y))<\frac{1}{2}$, then $d(\sigma^n(x),\sigma^n(y))<\frac{1}{2}$. For $y$, the subscripts of $0$ which has a $1$ in front of are (in descending order):
		$$2^1,2^1+2^2+1,2^1+2^2+2^3+1+2,\cdots,% 2^1+2^2+\cdots+2^m+1+2+\cdots+m-1=
		2(2^m-1)+\frac{m(m-1)}{2},\cdots$$
		
		For any $m\geq 1$, $$\mu=2(2^m-1)+\frac{m(m-1)}{2}\leq n\leq 2(2^{m+1}-1)+\frac{m(m+1)}{2}=\nu.$$ By $\mu>m$ and $\nu>m+1$, there exists $n\leq l\leq 2n$ such that the first element of $\sigma^l(y)$ is $1$. That is, there exists $\sigma^l(y)\in F^n(y)$ for any $a\in F^n(\theta)$, $d(a,\sigma^l(y))\geq\frac{1}{2}$. Then, $$d_H(F^(\theta),F^n(y))\geq\frac{1}{2}.$$
		So, $F$ is not accessible.
	\end{itemize}
\end{example}
Although that both $f_1$ and $f_2$ are accessible can't imply $F=\{f_1,f_2\}$ is accessible, we give two sufficient conditions for $F$ to be accessible as Theorem \ref{2} and Theorem \ref{thFacc}.

\begin{theorem}\label{2}
	If $f_1(x)=c$ ($\forall x\in X$, $c$ is a constant) , $f_2(c)=c$, and $f_2$ is accessible, then $F$ is accessible.
\end{theorem}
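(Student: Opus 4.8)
The plan is to reduce the accessibility of $F$ directly to that of $f_2$ by first pinning down $F^n(x)$ explicitly, in the same spirit as the proof of Theorem \ref{thFsen}. Since $f_1$ is the constant map with value $c$, any word $f_{i_1}f_{i_2}\cdots f_{i_n}$ in which some $i_j=1$ collapses its argument to $c$ at the innermost occurrence of $f_1$; and because $f_1(c)=c$ and $f_2(c)=c$, every map in the tuple fixes $c$, so such a composition evaluates to $c$ regardless of $x$. The only surviving word is $f_2^n$. Hence for every $x\in X$ and every $n>0$ one gets $F^n(x)=\{c,f_2^n(x)\}$ (a singleton when $f_2^n(x)=c$). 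This bookkeeping — checking that every word over $\{1,2\}$ containing at least one $1$ yields $c$ — is the only place the hypotheses $f_1\equiv c$ and $f_2(c)=c$ enter, and it is the substantive part of the argument, though it is elementary.

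Next I would record the Hausdorff-metric estimate $d_H(\{c,a\},\{c,b\})\le d(a,b)$ for all $a,b\in X$. Expanding the definition of $d_H$: on either side the point $c$ is matched by $c$ on the other side at distance $0$, while $a$ (resp.\ $b$) is matched with the nearer of $c$ and $b$ (resp.\ $a$), so each of the two one-sided terms is at most $d(a,b)$.

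Finally, given $\epsilon>0$ and nonempty open sets $U,V\subset X$, accessibility of $f_2$ produces $x\in U$, $y\in V$ and $n>0$ with $d(f_2^n(x),f_2^n(y))<\epsilon$. Combining the two previous steps,
\[
d_H(F^n(x),F^n(y))=d_H(\{c,f_2^n(x)\},\{c,f_2^n(y)\})\le d(f_2^n(x),f_2^n(y))<\epsilon,
\]
which is precisely the definition of $F$ being accessible. I do not expect a genuine obstacle here: the entire content is the description of $F^n$ and the Hausdorff comparison, both routine.
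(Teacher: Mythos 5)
Your proof is correct and follows essentially the same route as the paper: identify $F^n(x)=\{c,f_2^n(x)\}$ from $f_1\equiv c$ and $f_2(c)=c$, then transfer accessibility from $f_2$ to $F$ via the Hausdorff metric. The only difference is that you make explicit the estimate $d_H(\{c,a\},\{c,b\})\le d(a,b)$, which the paper's proof uses implicitly.
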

\begin{proof}
	For any $\epsilon>0$ and any nonempty open sets $U,V\subset X$, there exist $x\in U$, $y\in V$ and $n\in\mathbb{Z}^+$ such that $d(f_2^n(x),f_2^n(y))<\epsilon$. Since for any $x\in X$ and any $m>0$, $F^m(x)=\{c,f_2^m(x)\}$, 
	$$d_H(F^n(x),F^n(y))=d_H(\{c,f_2^n(x)\},\{c,f_2^n(y)\})<\epsilon.$$ So, $F$ is accessible.
\end{proof}
\begin{theorem}\label{thFacc}
	If there exists $0<\lambda<1$ such that for any nonempty open sets $U,V\subset X$, there exist $x\in U$ and $y\in V$ satisfying $$d(f_i(x),f_i(y))<\lambda d(x,y), \forall i=1,2,$$
	then $F$ is accessible.
\end{theorem}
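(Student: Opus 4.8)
The plan is to show that the contraction-type hypothesis forces, for every pair of nonempty open sets $U,V\subset X$ and every $\epsilon>0$, the existence of points $x\in U$, $y\in V$ and an iterate $n$ with $d_H(F^n(x),F^n(y))<\epsilon$. The natural approach is induction on $n$, building a nested sequence of pairs of points whose distance shrinks geometrically, and then controlling the Hausdorff distance between the $F^n$-images by the ordinary distance between the corresponding endpoints.

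First I would set up the iteration. Fix $U,V$ and $\epsilon>0$. Apply the hypothesis to $U,V$ to get $x_0\in U$, $y_0\in V$ with $d(f_i(x_0),f_i(y_0))<\lambda\, d(x_0,y_0)$ for $i=1,2$. The difficulty is that the next application of the hypothesis needs nonempty \emph{open} sets around the relevant points, whereas what we control are single points $f_i(x_0)$ and $f_i(y_0)$; moreover there are now two branches ($i=1$ and $i=2$), and after $n$ steps there are $2^n$ words $f_{i_1}\cdots f_{i_n}$ to track simultaneously. This is the main obstacle: one has to arrange the choices so that \emph{all} $2^n$ pairs $(f_{i_1}\cdots f_{i_n}(x_n),\, f_{i_1}\cdots f_{i_n}(y_n))$ stay close at once, not just one of them. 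I expect the intended reading of the theorem is that the single inequality $d(f_i(x),f_i(y))<\lambda d(x,y)$ for $i=1,2$ (same pair $x,y$ works for both maps) lets one push the estimate through both branches in parallel: starting from a pair at distance $r$, one obtains, for each $i$, a pair at distance $<\lambda r$; iterating and using continuity (or the openness of small balls) to re-apply the hypothesis inside shrinking balls, after $n$ steps every length-$n$ composition sends the two chosen points to within $\lambda^n\,\mathrm{diam}(X)$ of each other.

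The second step is to convert the pointwise closeness of all compositions into closeness of the sets $F^n(x)$ and $F^n(y)$ in the Hausdorff metric. Since $F^n(x)=\{f_{i_1}\cdots f_{i_n}(x)\mid i_j\in\{1,2\}\}$ and likewise for $y$, if for every word $(i_1,\dots,i_n)$ we have $d(f_{i_1}\cdots f_{i_n}(x),\, f_{i_1}\cdots f_{i_n}(y))<\eta$, then each point of $F^n(x)$ is within $\eta$ of the corresponding point of $F^n(y)$ and vice versa, so $d_H(F^n(x),F^n(y))\le\eta$. Hence it suffices to make $\eta=\lambda^n\,\mathrm{diam}(X)<\epsilon$, which holds for all large $n$ because $0<\lambda<1$ and $X$ is compact (so $\mathrm{diam}(X)<\infty$).

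Finally I would assemble these pieces: choose $n$ with $\lambda^n\,\mathrm{diam}(X)<\epsilon$, run the $n$-step selection to obtain $x\in U$ and $y\in V$ for which all length-$n$ compositions agree to within $\lambda^n\,\mathrm{diam}(X)$, conclude $d_H(F^n(x),F^n(y))<\epsilon$, and note that $\epsilon$, $U$, $V$ were arbitrary, so $F$ is accessible. The one place to be careful — and where I would spend the most care in the write-up — is the re-application of the hypothesis at each stage: one must either take the $x_{k},y_{k}$ inside ever-smaller open balls around the previous images so that the hypothesis is legitimately invoked on genuine open sets, or observe that the theorem's phrasing already quantifies over \emph{all} open sets so that shrinking neighbourhoods of the finitely many image points is harmless; I would make this bookkeeping explicit so the geometric decay estimate is rigorous rather than merely plausible.
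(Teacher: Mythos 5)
Your plan follows essentially the same route as the paper's proof: keep the original pair $x_0\in U$, $y_0\in V$, re-apply the hypothesis to small open balls around the (finitely many) image points at each stage, transfer the contraction back to the true compositions $f_{\alpha_k}\cdots f_{\alpha_1}(x_0),f_{\alpha_k}\cdots f_{\alpha_1}(y_0)$ by uniform continuity, and then bound $d_H(F^n(x_0),F^n(y_0))$ by the maximum of these word-by-word distances. The only adjustment needed is quantitative: since the hypothesis is invoked at auxiliary points near the images rather than at the images themselves, the clean bound $\lambda^n\,\mathrm{diam}(X)$ must be replaced by an estimate carrying the accumulated correction terms (the paper arrives at $n\lambda^n d(x_0,y_0)$), which still tends to $0$ as $n\to\infty$, so your conclusion goes through unchanged.
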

\begin{proof}
	Let $\epsilon>0$ and $U,V$ be nonempty open sets of $X$. Select $x_0\in U$ and $y_0\in V$ such that $$d(f_{\alpha_1}(x_0),f_{\alpha_1}(y_0))<\lambda d(x_0,y_0),\forall \alpha_1=1,2.$$
	
	\begin{itemize}
		\item [Step $1$:] Take $\eta_1=\frac{\lambda^2 d(x_0,y_0)}{5}>0$. By $f_1$ and $f_2$ are continuous and $X$ is compact, there exists $0<\delta_1<\frac{\lambda d(x_0,y_0)}{4}$ such that for any $x,y\in X$, $$d(x,y)<\delta_1\Rightarrow d(f_i(x),f_i(y))<\eta_1,\forall i=1,2.$$
		
		Select $u_{\alpha_1}\in B(f_{\alpha_1}(x),\delta_1)$ and $v_{\alpha_1}\in B(f_{\alpha_1}(y),\delta_1)$ satisfying $$d(f_{\alpha_2}(u_{\alpha_1}),f_{\alpha_2}(v_{\alpha_1}))<\lambda d(u_{\alpha_1},v_{\alpha_1}),\forall \alpha_2=1,2.$$
		Then 
		\begin{align}
			&d(f_{\alpha_2}f_{\alpha_1}(x_0),f_{\alpha_2}f_{\alpha_1}(y_0)) \nonumber \\   % &在=左边就不用{}
			<&d(f_{\alpha_2}f_{\alpha_1}(x_0),f_{\alpha_2}(u_{\alpha_1}))+d(f_{\alpha_2}(u_{\alpha_1}),f_{\alpha_2}(v_{\alpha_1}))+d(f_{\alpha_2}(v_{\alpha_1}),f_{\alpha_2}f_{\alpha_1}(y_0))\nonumber\\
			<&\eta_1+\lambda d(u_{\alpha_1},v_{\alpha_1})+\eta_1 \nonumber\\
			<&2\eta_1+\lambda(d(u_{\alpha_1},f_{\alpha_1}(x_0))+d(f_{\alpha_1}(x_0),f_{\alpha_1}(y_0))+d(f_{\alpha_1}(y_0),v_{\alpha_1})) \nonumber\\
			<&2\eta_1+\lambda(\delta_1+\lambda d(x_0,y_0)+\delta_1) \nonumber\\
			<&2\eta_1+\lambda^2 d(x_0,y_0)+2\delta_1\lambda\nonumber\\
			<&\lambda^2 d(x_0,y_0)+2\lambda\frac{\lambda d(x_0,y_0)}{4}+2\frac{\lambda^2 d(x_0,y_0)}{4}\nonumber\\
			=&2\lambda^2 d(x_0,y_0).\nonumber
		\end{align}	
		\item [Step $2$:] Take $\eta_2=\frac{\lambda^3 d(x_0,y_0)}{5}>0$. By $f_1$ and $f_2$ are continuous and $X$ is compact, there exists $0<\delta_2<\frac{\lambda^2 d(x_0,y_0)}{4}$ such that for any $x,y\in X$, $$d(x,y)<\delta_2\Rightarrow d(f_i(x),f_i(y))<\eta_2,\forall i=1,2.$$
		
		Select $u_{\alpha_2}\in B(f_{\alpha_2}f_{\alpha_1}(x),\delta_2)$ and $v_{\alpha_2}\in B(f_{\alpha_2}f_{\alpha_1}(y),\delta_2)$ satisfying $$d(f_{\alpha_3}(u_{\alpha_2}),f_{\alpha_3}(v_{\alpha_2}))<\lambda d(u_{\alpha_2},v_{\alpha_2}),\forall \alpha_3=1,2.$$
		Then 
		\begin{align}
			&d(f_{\alpha_3}f_{\alpha_2}f_{\alpha_1}(x_0),f_{\alpha_3}f_{\alpha_2}f_{\alpha_1}(y_0)) \nonumber \\   % &在=左边就不用{}
			<&d(f_{\alpha_3}f_{\alpha_2}f_{\alpha_1}(x_0),f_{\alpha_3}(u_{\alpha_2}))+d(f_{\alpha_3}(u_{\alpha_2}),f_{\alpha_3}(v_{\alpha_2}))+d(f_{\alpha_3}(v_{\alpha_2}),f_{\alpha_3}f_{\alpha_2}f_{\alpha_1}(y_0))\nonumber\\
			<&2\eta_2+\lambda d(u_{\alpha_2},v_{\alpha_2}) \nonumber\\
			<&2\eta_2+\lambda(d(u_{\alpha_2},f_{\alpha_2}f_{\alpha_1}(x_0))+d(f_{\alpha_2}f_{\alpha_1}(x_0),f_{\alpha_2}f_{\alpha_1}(y_0))+d(f_{\alpha_2}f_{\alpha_1}(y_0),v_{\alpha_2})) \nonumber\\%
			<&2\eta_2+\lambda(2\delta_2+2\lambda^2 d(x_0,y_0))\nonumber\\%
			<&\frac{2}{4}\lambda^3 d(x_0,y_0)+2\lambda\frac{\lambda^2 d(x_0,y_0)}{4}+2\lambda^3 d(x_0,y_0)\nonumber\\
			=&3\lambda^3 d(x_0,y_0).\nonumber
		\end{align}	
		\item[$\cdots$ $\cdots$]
		
		\item[Step $n$:] The same goes for this step $n$, in which $n$ satisfies $n\lambda^n d(x_0,y_0)<\epsilon$. Then $$d(f_{\alpha_n}\cdots f_{\alpha_1}(x_0),f_{\alpha_n}\cdots f_{\alpha_1}(y_0))<n\lambda^n d(x_0,y_0)<\epsilon,\forall \alpha_1, \alpha_2,\cdots,\alpha_n=1,2.$$ 
		That is, for any $u\in F^n(x_0)$ there exists $v\in F^n(y_0)$ such that $d(u,v)<\epsilon$ and for any $v\in F^n(y_0)$ there exists $u\in F^n(x_0)$ such that $d(u,v)<\epsilon$. Then $$d_H(F^n(x_0),F^n(y_0))<\epsilon.$$ So, $F$ is accessible.
	\end{itemize}
\end{proof}

Now let's illustrate Theorem \ref{thFacc}.
\begin{example}
	Let $S^1$ be the unit circle on the complex plane. Define the metric on $S^1$ by $d(e^{i\alpha},e^{i\beta})=\frac{\mid \alpha-\beta\mid }{2\pi}$ for any $e^{i\alpha},e^{i\beta}\in S^1$. Consider the multiple mappings defined on $S^1$ as $F=\{f_1,f_2\}$, in which $f_1(e^{i\alpha})=e^{i\frac{\alpha}{2}}$ and $f_2(e^{i\alpha})=e^{i\frac{\alpha}{3}}$. 
	
	For any $e^{i\alpha},e^{i\beta}\in S^1$, $$d(f_1(e^{i\alpha}),f_1(e^{i\beta}))=d(e^{i\frac{\alpha}{2}},e^{i\frac{\beta}{2}})=\frac{{\mid \frac{\alpha}{2}-\frac{\beta}{2}\mid }}{2\pi}<\frac{\frac{2}{3}\mid \alpha-\beta\mid }{2\pi}=\frac{2}{3}d(e^{i\alpha},e^{i\beta}).$$ So, $F$ is accessible.
\end{example}
\subsection{Kato's Chaos}
Kato's chaos of $F$ can't imply that $f_1$ or $f_2$ is Kato chaotic.
\begin{example}
	Consider the multiple mappings $F$ which is defined by Example \ref{yyzeng}. 
	
	Let $\epsilon>0$ and $U,V\subset X$ be nonempty and open. Then by the accessibility of the tent map, there exist $x\in U$, $y\in V$ and $n>0$ such that $d_H(F^n(x),F^n(y))<\epsilon$. So, $F$ is accessible. And $F$ is Kato chaotic. While neither $f_1$ nor $f_2$ is sensitive. Then neither $f_1$ nor $f_2$ is Kato chaotic.
\end{example}
Based on Theorem \ref{thFsen} and Theorem \ref{2}, the following corollary can be derived. 
\begin{corollary}
	If $f_1(x)=c$ ($\forall x\in X$, $c$ is a constant), $f_2(c)=c$ and $f_2$ is Kato chaotic, then $F$ is Kato chaotic.
\end{corollary}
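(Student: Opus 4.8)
The plan is to combine the two already-established theorems directly, since the hypotheses of this corollary are precisely the union of the hypotheses needed for Kato chaos, which by Definition \ref{newde}(3) means sensitive \emph{and} accessible. So the proof is a two-line deduction: one line invoking Theorem \ref{thFsen} and one line invoking Theorem \ref{2}.

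First I would unpack what ``$f_2$ is Kato chaotic'' gives us: by the definition of Kato chaos for a single continuous self-map (the enumerated list in the introduction), $f_2$ is both sensitive and accessible. Next, I would observe that under the standing hypotheses $f_1\equiv c$ and $f_2(c)=c$, the sensitivity of $f_2$ together with Theorem \ref{thFsen} yields that $F$ is sensitive. Then, using the same hypotheses $f_1\equiv c$ and $f_2(c)=c$ together with the accessibility of $f_2$, Theorem \ref{2} yields that $F$ is accessible. Finally, since $F$ is both sensitive and accessible, Definition \ref{newde}(3) gives that $F$ is Kato chaotic, which is the desired conclusion.

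There is essentially no obstacle here: the corollary is a formal consequence of Theorem \ref{thFsen} and Theorem \ref{2}, and the only thing to be careful about is bookkeeping — making sure the constant $c$ and the condition $f_2(c)=c$ are the same in both invoked theorems (they are, verbatim) and that the notion of ``Kato chaotic'' for the single map $f_2$ is being used on the hypothesis side while the notion of ``Hausdorff metric Kato chaotic'' for the multiple mapping $F$ is being used on the conclusion side. If anything, I would add one clarifying sentence noting that the hypothesis needs no compactness assumption beyond what Theorems \ref{thFsen} and \ref{2} already require, so the corollary holds in whatever generality those theorems were stated.

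Concretely, the proof I would write is:
\begin{proof}
	Since $f_2$ is Kato chaotic, $f_2$ is sensitive and accessible. As $f_1(x)=c$ for all $x\in X$, $f_2(c)=c$, and $f_2$ is sensitive, Theorem \ref{thFsen} implies that $F$ is sensitive. Likewise, as $f_1(x)=c$ for all $x\in X$, $f_2(c)=c$, and $f_2$ is accessible, Theorem \ref{2} implies that $F$ is accessible. Therefore $F$ is sensitive and accessible, i.e., $F$ is Kato chaotic.
\end{proof}
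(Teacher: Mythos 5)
Your proof is correct and is exactly the route the paper intends: the paper states the corollary as an immediate consequence of Theorem \ref{thFsen} (giving sensitivity of $F$) and Theorem \ref{2} (giving accessibility of $F$), which is precisely your two-line deduction. No gaps.
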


\section{Topological Conjugation}\label{sec4}
In this section, we will demonstrate that the sensitivity, accessibility, and Kato's chaos of multiple mappings are preserved under topological conjugation. Before that, let's first introduce the definition of topological conjugacy about multiple mappings.

\begin{definition}
	Let $(X,F)$ and $(Y,G)$ be two dynamical systems to multiple mappings on compact metric spaces. If there exists a homeomorphism map $T:X\rightarrow Y$ such that for any $x\in X$, $TF(x)=GT(x)$, then $T$ is said to be a topological conjugacy from $(X,F)$ to $(Y,G)$.
\end{definition}
\begin{theorem}
	Suppose that $(X,F)$ and $(Y,G)$ be dynamical systems to multiple mappings on compact space with metric $d_H$ and $\rho_H$. Let $T:X\rightarrow Y$ be a topological conjugacy from $(X,F)$ to $(Y,G)$. Then $(X,F)$ is Hausdorff metric sensitive if and only if $(Y,G)$ is Hausdorff metric sensitive.
\end{theorem}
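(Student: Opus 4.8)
The plan is to exploit the compactness of $X$ and the uniform continuity of the conjugacy $T$ and its inverse $T^{-1}$ to transport a sensitivity constant from one system to the other. Since the statement is an ``if and only if'' and the two directions are symmetric (the inverse $T^{-1}$ is a topological conjugacy from $(Y,G)$ to $(X,F)$), it suffices to prove one implication, say that Hausdorff metric sensitivity of $(X,F)$ implies that of $(Y,G)$.

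First I would record the basic intertwining relation. From $TF(x)=GT(x)$ for all $x\in X$, an easy induction gives $TF^n(x)=G^nT(x)$ for all $n\in\mathbb{Z}^+$ and all $x\in X$; here $T$ is applied to a compact set elementwise, i.e. $T(K)=\{T(k):k\in K\}$. Next, since $T:X\to Y$ is a homeomorphism between compact metric spaces, both $T$ and $T^{-1}$ are uniformly continuous, and moreover the induced map $\widetilde{T}:\mathbb{K}(X)\to\mathbb{K}(Y)$, $K\mapsto T(K)$, is a homeomorphism between the compact metric spaces $(\mathbb{K}(X),d_H)$ and $(\mathbb{K}(Y),\rho_H)$, hence also uniformly continuous with uniformly continuous inverse. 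Concretely, I would fix the sensitivity constant $\delta>0$ of $(X,F)$ and use uniform continuity of $\widetilde{T}^{-1}$ to produce $\delta'>0$ such that for all $A,B\in\mathbb{K}(Y)$, $\rho_H(A,B)\le\delta'$ implies $d_H(\widetilde{T}^{-1}(A),\widetilde{T}^{-1}(B))\le\delta$. I claim $\delta'$ is a sensitivity constant for $(Y,G)$.

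To verify the claim, let $W\subset Y$ be an arbitrary nonempty open set. Then $U:=T^{-1}(W)$ is a nonempty open subset of $X$ (as $T$ is a homeomorphism), so by sensitivity of $(X,F)$ there exist $x,y\in U$ and $n\in\mathbb{Z}^+$ with $d_H(F^n(x),F^n(y))>\delta$. Put $x':=T(x)\in W$ and $y':=T(y)\in W$. Using the intertwining relation, $G^n(x')=G^nT(x)=TF^n(x)=\widetilde{T}(F^n(x))$ and likewise $G^n(y')=\widetilde{T}(F^n(y))$. If we had $\rho_H(G^n(x'),G^n(y'))\le\delta'$, then by the choice of $\delta'$ we would get $d_H(F^n(x),F^n(y))=d_H(\widetilde{T}^{-1}(G^n(x')),\widetilde{T}^{-1}(G^n(y')))\le\delta$, contradicting the choice of $x,y,n$. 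Hence $\rho_H(G^n(x'),G^n(y'))>\delta'$, which shows $(Y,G)$ is Hausdorff metric sensitive with constant $\delta'$. Applying the same argument with $T^{-1}$ in place of $T$ gives the reverse implication, completing the proof.

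The only mildly delicate point is the assertion that $\widetilde{T}$ is a homeomorphism of Hausdorff hyperspaces (equivalently, that a uniform-continuity modulus for $T$ on $X$ yields one for $\widetilde{T}$ on $\mathbb{K}(X)$, and similarly for the inverse); this is a standard fact about the Hausdorff metric, but it is the step I would state carefully, since everything else is a routine $\varepsilon$-$\delta$ transport. One can avoid invoking the hyperspace homeomorphism abstractly by arguing directly: if $d(T^{-1}(a),T^{-1}(b))<\delta$ whenever $\rho(a,b)<\delta'$ (uniform continuity of $T^{-1}$ on $Y$), then a short computation with suprema and infima in the definition of $d_H$ shows $\rho_H(A,B)<\delta'\Rightarrow d_H(T^{-1}(A),T^{-1}(B))\le\delta$ for all $A,B\in\mathbb{K}(Y)$, which is exactly what the claim needs.
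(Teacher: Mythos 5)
Your proposal is correct and follows essentially the same route as the paper: both arguments rest on the uniform continuity of $T^{-1}$ (transferred to the Hausdorff metric on the hyperspace), the intertwining relation $TF^n=G^nT$, and the fact that the homeomorphism carries nonempty open sets to nonempty open sets, with the reverse implication obtained by symmetry since $T^{-1}$ is again a conjugacy. The only cosmetic difference is that you argue directly that $\delta'$ is a sensitivity constant for $(Y,G)$, whereas the paper phrases the same transport as a proof by contradiction, and you are somewhat more explicit about the hyperspace uniform-continuity step that the paper simply asserts from compactness.
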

\begin{proof}
	necessity$\Rightarrow$: Let $\delta>0$ be the sensitive constant of $F$. Suppose that $G$ is not sensitive. By Lemma 1 of \cite{zeng2020}, $T^{-1}:Y\rightarrow X$ be a topological conjugacy from $(Y,G)$ to $(X,F)$. For $\delta>0$, since $T^{-1}$ is continuous and both $X$ and $Y$ are compact, there exists $2\delta'>0$ such that for any $A,B\in K(Y)$, 
	\begin{equation}\label{eq1}
		\rho_H(A,B)<2\delta'\Rightarrow d_H(T^{-1}(A),T^{-1}(B))<\delta.
	\end{equation}
	
	Since $G$ is not sensitive, there exists nonempty open set $U\subset Y$ such that for any $x,y\in U$ and any $n>0$, $$\rho_H(G^n(x),G^n(y))\leq \delta'<2\delta'.$$
	By (\ref{eq1}), $d_H(T^{-1}G^n(x),T^{-1}G^n(y))<\delta$. By $T^{-1}G^n=F^nT^{-1}$, $$d_H(F^nT^{-1}(x),F^nT^{-1}(y))<\delta.$$ Since $U$ is open and $T^{-1}$ is homeomorphous, $T^{-1}(U)$ is open. Then for any $u,v\in T^{-1}(U)$ and any $n>0$, $$d_H(F^n(u),F^n(v))<\delta,$$
	which contradicts that $\delta$ is a sensitive constant of $F$. So, $G$ is sensitive.
	
	The proof of sufficiency is similar with the above proof of necessity.
\end{proof}
\begin{theorem}
	Suppose that $(X,F)$ and $(Y,G)$ be dynamical systems to multiple mappings on compact space with metric $d_H$ and $\rho_H$. Let $T:X\rightarrow Y$ be a topological conjugacy from $(X,F)$ to $(Y,G)$. Then $(X,F)$ is Hausdorff metric accessible if and only if $(Y,G)$ is Hausdorff metric accessible.
\end{theorem}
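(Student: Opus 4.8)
The plan is to mirror the proof of the preceding theorem on sensitivity, exploiting the symmetry afforded by Lemma 1 of \cite{zeng2020}: since $T^{-1}:Y\to X$ is again a topological conjugacy from $(Y,G)$ to $(X,F)$, it suffices to prove one implication, say that accessibility of $(X,F)$ forces accessibility of $(Y,G)$; the converse then follows verbatim with $T$ replaced by $T^{-1}$.

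First I would record the elementary fact that the conjugacy relation $TF(x)=GT(x)$ iterates, i.e. $TF^n(x)=G^nT(x)$ for every $n\in\mathbb{Z}^+$ and every $x\in X$, where $T$ is applied to a compact set image-wise. This is a short induction using $F^{n+1}(x)=\bigcup_{a\in F^n(x)}F(a)$ together with the fact that $T$ commutes with unions and that $\widetilde{G}(T(\cdot))$ agrees with $G(T(\cdot))$ on points. Next, since $T$ is a homeomorphism of the compact space $X$ onto $Y$, the induced hyperspace map $\mathbb{K}(X)\to\mathbb{K}(Y)$, $A\mapsto T(A)$, is uniformly continuous for the Hausdorff metrics $d_H$ and $\rho_H$; hence for the prescribed $\epsilon>0$ there is $\epsilon'>0$ such that for all $A,B\in\mathbb{K}(X)$ one has $d_H(A,B)<\epsilon'\Rightarrow\rho_H(T(A),T(B))<\epsilon$.

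Now let $\epsilon>0$ and let $U,V\subset Y$ be nonempty open sets. Put $U'=T^{-1}(U)$ and $V'=T^{-1}(V)$, which are nonempty and open because $T$ is a homeomorphism. Applying accessibility of $(X,F)$ to the constant $\epsilon'$ and the pair $U',V'$ yields $x'\in U'$, $y'\in V'$ and $n\in\mathbb{Z}^+$ with $d_H(F^n(x'),F^n(y'))<\epsilon'$. By the choice of $\epsilon'$, $\rho_H(T(F^n(x')),T(F^n(y')))<\epsilon$, and by the iterated conjugacy relation this is exactly $\rho_H(G^n(T(x')),G^n(T(y')))<\epsilon$. Setting $x=T(x')\in U$ and $y=T(y')\in V$ produces the required points and exponent, so $(Y,G)$ is accessible.

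As for difficulty, the statement is essentially routine once the sensitivity case is established; the only points deserving a little care are the verification that the conjugacy relation survives iteration in the set-valued setting and the upgrade from pointwise continuity of $T$ to uniform continuity of the induced hyperspace map, which is where compactness of $X$ (and $Y$) enters. No genuine obstacle is expected beyond bookkeeping.
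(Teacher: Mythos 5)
Your proposal is correct and follows essentially the same route as the paper: pull back the open sets $U,V$ via $T^{-1}$, use compactness to get uniform continuity of the induced hyperspace map (the paper's implication $d_H(A,B)<\delta\Rightarrow\rho_H(T(A),T(B))<\epsilon$), apply accessibility of $(X,F)$, and transfer the estimate via $TF^n=G^nT$. The only cosmetic difference is that you invoke Lemma 1 of \cite{zeng2020} to dispose of the converse, while the paper simply remarks that the sufficiency proof is similar.
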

\begin{proof}
	necessity$\Rightarrow$: Let $\epsilon>0$. 
	By $T$ is continuous and both $X$ and $Y$ are compact, there exists $\delta>0$ such that for any $A,B\in K(X)$, 
	\begin{equation}\label{eq2}
		d_H(A,B)<\delta\Rightarrow \rho_H(T(A),T(B))<\epsilon.
	\end{equation}
	Let $U,V\subset X$ be nonempty open sets. Then $T^{-1}(U)$ and $T^{-1}(V)$ are nonempty open sets of $X$. By $F$ is accessible, there exist $x\in T^{-1}(U)$, $y\in T^{-1}(V)$ and $n>0$ such that $$d_H(F^n(x),F^n(y))<\delta.$$
	By (\ref{eq2}), $\rho_H(TF^n(x),TF^n(y))<\epsilon$. By $G^nT=TF^n$, $$d_H(G^nT(x),G^nT(y))<\epsilon.$$ Thus, there exist $u=T(x)\in U$, $v=T(y)\in V$ and $n>0$ such that $$\rho_H(G^n(u),G^n(v))<\epsilon.$$ So, $G$ is accessible.
	
	The proof of sufficiency is similar with the above proof of necessity.
\end{proof}
\begin{corollary}
	Suppose that $(X,F)$ and $(Y,G)$ be dynamical systems to multiple mappings on compact space with metric $d_H$ and $\rho_H$. Let $T:X\rightarrow Y$ be a topological conjugacy from $(X,F)$ to $(Y,G)$. Then $(X,F)$ is Hausdorff metric Kato chaotic if and only if $(Y,G)$ is Hausdorff metric Kato chaotic.
\end{corollary}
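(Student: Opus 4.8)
The plan is to deduce this corollary directly from the two preceding theorems together with the definition of Hausdorff metric Kato chaos (Definition \ref{newde}(3)), namely that a multiple mappings is Kato chaotic precisely when it is simultaneously Hausdorff metric sensitive and Hausdorff metric accessible. No new construction is needed; the work is purely a matter of combining what has already been established.

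First I would fix the topological conjugacy $T:X\rightarrow Y$ from $(X,F)$ to $(Y,G)$ and recall that, by Lemma 1 of \cite{zeng2020}, $T^{-1}:Y\rightarrow X$ is a topological conjugacy from $(Y,G)$ to $(X,F)$, so the situation is symmetric in $(X,F)$ and $(Y,G)$; hence it suffices to prove one implication, say that $(X,F)$ Kato chaotic implies $(Y,G)$ Kato chaotic. Next, assuming $(X,F)$ is Hausdorff metric Kato chaotic, I would unpack this as: $(X,F)$ is Hausdorff metric sensitive and $(X,F)$ is Hausdorff metric accessible. Applying the sensitivity theorem (the first theorem of Section \ref{sec4}) to the conjugacy $T$ gives that $(Y,G)$ is Hausdorff metric sensitive, and applying the accessibility theorem (the second theorem of Section \ref{sec4}) to the same $T$ gives that $(Y,G)$ is Hausdorff metric accessible. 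Therefore $(Y,G)$ is both sensitive and accessible, i.e.\ Hausdorff metric Kato chaotic, which is the desired conclusion. The reverse implication follows identically by replacing $T$ with $T^{-1}$, establishing the ``if and only if''.

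There is essentially no obstacle here: the only thing to be careful about is to invoke the correct direction of each theorem and to note explicitly that the two theorems apply to the \emph{same} conjugacy $T$, so that the witnessing $\delta$'s for sensitivity and the $\epsilon$-to-$\delta$ correspondences for accessibility can be used without conflict (they are logically independent conditions). If one wanted to be fully self-contained one could instead inline the two arguments, but citing the theorems is cleaner and is the natural style for a corollary.
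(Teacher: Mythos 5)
Your proposal is correct and is exactly the argument the paper intends: the corollary follows immediately by combining the two preceding theorems with Definition \ref{newde}(3), since Kato's chaos is by definition the conjunction of sensitivity and accessibility, both of which were just shown to be preserved under topological conjugation. Note only that the appeal to $T^{-1}$ (via Lemma 1 of \cite{zeng2020}) is not even needed, because both cited theorems are already stated as ``if and only if''.
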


\section{Conclusions}
We define and study the sensitivity, accessibility and Kato's chaos of multiple mappings from the perspective of a set-valued view. This perspective is different from the group-theoretic view that has been previously studied in the context of dynamical systems of iterated function systems. We show that
\begin{itemize}
	\item [(1)]sensitivity of multiple mappings $F=\{f_1,f_2\}$ and its $2$-tuple of continuous self-maps $f_1,f_2$ do not imply each other. Also, accessibility of $F=\{f_1,f_2\}$ and $f_1,f_2$ do not imply each other.
	
	\item[(2)]a sufficient condition for $F$ to be sensitive is provided, as well as two sufficient conditions for $F$ to be accessible.
	
	\item[(3)]Kato's chaos of $F$ can't imply that $f_1$ or $f_2$ is Kato chaotic.
	
	\item[(4)]the sensitivity, accessibility, and Kato's chaos of multiple mappings are preserved under topological conjugation.
\end{itemize}
The above conclusions not only deepen our understanding of continuous self-maps but also enable us to use relatively simple continuous self-maps to comprehend relatively complex multiple mappings. We hope that these conclusions can enrich the achievements in the field of multiple mappings and provide some assistance for further in-depth research in this area.

\end{document}